\renewcommand{\leq}{\leqslant}
\renewcommand{\geq}{\geqslant}
\newtheorem{thm}{Theorem}
\newtheorem{lemma}[thm]{Lemma}
\DeclareMathOperator{\mad}{mad}
\newcommand{\N}{\mathbb{N}}
\newcommand{\eps}{\varepsilon}
\title[A golden ratio inequality for vertex degrees of graphs]{A golden ratio inequality\\ for vertex degrees of graphs}
\author{Fiachra Knox}
\address{\noindent Fiachra Knox\newline
Department of Mathematics\newline
Simon Fraser University\newline
Burnaby, Canada}
\email{fiachraknox@hotmail.com}
\thanks{F.K.~was supported by a PIMS Postdoctoral Fellowship.}
\author{Bojan Mohar}
\address{\noindent Bojan Mohar\newline
Department of Mathematics\newline
Simon Fraser University\newline
Burnaby, Canada}
\email{mohar@sfu.ca}
\thanks{B.M.~was supported in part by the NSERC Discovery Grant R611450 (Canada), by the Canada Research Chairs program, and by the Research Project J1-8130 of ARRS (Slovenia).}
\author{David R. Wood}
\address{\noindent David R. Wood\newline
School of Mathematical Sciences\newline
Monash University\newline
Melbourne, Australia}
\email{david.wood@monash.edu}
\thanks{Research of D.W.\ was supported by the Australian Research Council.}
\date{\today}
\begin{document}

\maketitle

\begin{abstract}
Motivated by the study of the crossing number of graphs, it is shown that, for trees, the sum of the products of the degrees of the end-vertices of all edges has an upper bound in terms of the sum of all vertex degrees to the power of $\phi^2$, where $\phi$ is the golden ratio. The exponent $\phi^2$ is best possible. This inequality is generalized for all graphs with bounded maximum average degree.
\end{abstract}

\bigskip
In a  study of the crossing number of graphs \cite{DKMW08,DKMW18}, the authors proved upper bounds on the crossing number for various graph classes. For a graph $G$ with vertex set $V(G)$ and edge set $E(G)$, if $d(v)$ denotes the degree of each vertex $v\in V(G)$, then these upper bounds are of the form 
\begin{equation*}
\alpha \sum_{uv \in E(G)}  d(u) d(v)
\qquad\text{or}\qquad
\alpha  \sum_{v \in V(G)}  d(v)^\beta,
\end{equation*}
where $\alpha$ and $\beta$ are constants depending on the particular class. 
As way to compare such bounds, the authors noted that 
\begin{align}
\label{Cubed}
\sum_{uv \in E(G)}  d(u) d(v) \;\leq\; \tfrac12  \sum_{v \in V(G)}  d(v)^3,
\end{align}
with equality for every regular graph (that is, if $d(u)=d(v)$ for all $u,v\in V(G)$). The proof is an easy exercise.

While the exponent of 3 in the right-hand side of \eqref{Cubed} cannot be improved for regular graphs,
for classes of graphs that allow for many different vertex degrees, such as trees and planar graphs,
it is natural to ask what is the minimum exponent such that every graph in the class satisfies an analogous inequality (allowing $\frac12$ to be replaced by some other constant).



We answer this question for trees and planar graphs. In fact, our result holds in a more general setting, which we now introduce. Let $\overline{d}(G)$ denote the average degree of a graph $G$. Note that for every tree $T$,
\begin{equation*}
    \overline{d}(T) = \frac{\sum_{v\in V(T)}d(v)}{|V(T)|} = \frac{2|E(T)|}{|V(T)|} = \frac{2(|V(T)|-1)}{|V(T)|} < 2.
\end{equation*}
Similarly, it is a simple consequence of Euler's formula that every planar graph has average degree less than 6. Note that every subgraph of a planar graph $G$ is also planar and thus its average degree is also less than 6. This motivates the following definition.

The \emph{maximum average degree} of a graph $G$ is the maximum of the average degrees of the (induced) subgraphs of $G$:
\begin{equation*}
    \mad(G) = \max_{H\subseteq G} \overline{d}(H).
\end{equation*}
Many well known classes of graphs have bounded maximum average degree: trees have maximum average degree less than $2$, series parallel graphs have maximum average degree less than $4$, planar graphs have maximum average degree less than $6$, and graphs of genus $g$ have maximum average degree $O(\sqrt{g})$.

It is therefore natural to ask whether  \eqref{Cubed} can be improved for classes of graphs with bounded maximum average degree. The following theorem answers this question. It is interesting and surprising that the golden ratio $\phi=1.618\ldots$ arises in this context.

\begin{thm}
\label{main}
For every $k \in \N$ and every graph $G$ with maximum average degree at most $2k$,
\begin{equation}
\label{eq:main}
  \sum_{uv \in E(G)} d(u) d(v) \;\leq\; k^{2-\phi} \sum_{v \in V(G)} d(v)^{\phi^2},
\end{equation}
where $\phi =\tfrac{1}{2}(1+\sqrt{5}\,)$. Moreover, both the exponent $\phi^2$ and the constant $k^{2-\phi}$ are best possible.
\end{thm}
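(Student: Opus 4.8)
The plan is to prove \eqref{eq:main} by combining a sparse orientation with a weighted form of Young's inequality, and then to establish optimality through two explicit constructions. For the upper bound I would first use the hypothesis $\mad(G)\le 2k$: the condition $|E(H)|\le k|V(H)|$ for every subgraph $H$ is exactly the condition (Hakimi's theorem, equivalently Nash--Williams, or a short max-flow/Hall argument) that guarantees an orientation of $G$ in which every vertex has out-degree at most $k$. Fix such an orientation and rewrite the left-hand side as $\sum_{u\to v} d(u)d(v)$, the sum over the oriented edges. The key observation is that $\phi$ and $\phi^2$ are conjugate exponents, since $\phi^2=\phi+1$ gives $\tfrac1\phi+\tfrac1{\phi^2}=1$. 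Thus for a scaling parameter $s>0$, Young's inequality applied to each oriented edge gives
\[
  d(u)d(v)\;\le\;\tfrac1{\phi^2}\bigl(s\,d(u)\bigr)^{\phi^2}+\tfrac1\phi\bigl(s^{-1}d(v)\bigr)^{\phi},
\]
where deliberately the large exponent $\phi^2$ is charged to the tail $u$ and the conjugate exponent $\phi$ to the head $v$. Summing over oriented edges, the tail terms group by their tail and are controlled by the out-degree bound, $\sum_{u\to v}d(u)^{\phi^2}=\sum_u d^+(u)\,d(u)^{\phi^2}\le k\sum_u d(u)^{\phi^2}$; the head terms group by their head and use only $d^-(v)\le d(v)$ together with $\phi+1=\phi^2$, giving $\sum_{u\to v}d(v)^{\phi}=\sum_v d^-(v)\,d(v)^{\phi}\le\sum_v d(v)^{\phi^2}$. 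This yields $\sum_{uv}d(u)d(v)\le\bigl(\tfrac{k}{\phi^2}s^{\phi^2}+\tfrac1\phi s^{-\phi}\bigr)\sum_v d(v)^{\phi^2}$.

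It then remains to minimize $f(s)=\tfrac{k}{\phi^2}s^{\phi^2}+\tfrac1\phi s^{-\phi}$ over $s>0$. Setting $f'(s)=0$ gives $s^{\phi^2+\phi}=1/k$, and since $\phi^2+\phi=\phi\cdot\phi^2=\phi^3$ the optimum is $s=k^{-1/\phi^3}$. Substituting and using the golden-ratio identities $\tfrac1{\phi^2}=2-\phi$ and $\tfrac1\phi+\tfrac1{\phi^2}=1$, both terms acquire the factor $k^{2-\phi}$ and collapse to $f(s)=\bigl(\tfrac1{\phi^2}+\tfrac1\phi\bigr)k^{2-\phi}=k^{2-\phi}$, exactly the claimed constant. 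This is the conceptual heart of the argument: the golden ratio is forced because $\phi$ is the exponent conjugate to its own square.

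For optimality of the exponent I would use the two-level tree $T_{p,q}$ consisting of a centre of degree $p$, its $p$ neighbours each of degree $q$, and $q-1$ leaves hanging off each neighbour. A direct count gives $\sum_{uv}d(u)d(v)=pq(p+q-1)$ and $\sum_v d(v)^{\beta}=p^{\beta}+pq^{\beta}+p(q-1)$. Setting $p\sim q^{t}$ and letting $q\to\infty$, the ratio of the two sides stays bounded only when $2t+1\le\max\{t+\beta,\,t\beta\}$; a short analysis shows that a violating $t$ exists for every $\beta<\phi^2$, the threshold being the larger root of $(\beta-1)(\beta-2)=1$, namely $\beta=\tfrac{3+\sqrt5}{2}=\phi^2$. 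Hence for each $\beta<\phi^2$ the ratio is unbounded over this family (the extremal scaling being $t=\phi$, i.e.\ $p\sim q^{\phi}$), so the exponent cannot be decreased.

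For optimality of the constant I would first treat $k=1$ and then bootstrap. For $k=1$ the relevant class is that of pseudoforests (orientations of out-degree $1$), and I would build a deep layered graph in which layer $i$ has degree $D_i$ with $D_{i+1}=D_i^{\phi}$, each vertex sending its single out-edge up one layer and receiving $D_i-1$ in-edges from below. The equality analysis of Young's inequality then holds edge-by-edge in the interior: each layer contributes to the left-hand side exactly its weight $\sum d(v)^{\phi^2}$, and these weights are equal across layers, so the ratio tends to $1=1^{2-\phi}$ as the depth grows and the $O(1)$ boundary layers become negligible. For general $k$, I would replace each vertex of this $k=1$ family by $k$ copies and each edge by a copy of $K_{k,k}$; this multiplies all degrees and all out-degrees by $k$, keeps $\mad\le 2k$, and multiplies the ratio by exactly $k^{2-\phi}$, so the ratio tends to $k^{2-\phi}$. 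I expect this last part to be the main obstacle: the two equality conditions---out-degree exactly $k$ at every vertex, yet in-degree essentially equal to the full degree---are mutually contradictory unless all degrees are $\gg k$, which is precisely why no single gadget can be extremal and a deep hierarchy with a careful accounting of the boundary is required.
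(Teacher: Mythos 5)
Your proposal is correct and follows essentially the same route as the paper: Hakimi's orientation theorem plus the weighted AM--GM/Young inequality with conjugate exponents $\phi$ and $\phi^2$ for the upper bound (your optimization over the scaling parameter $s$ just rederives the constants the paper plugs in directly), and for sharpness the same layered tree with degrees growing as $a^{\phi^{i}}$ together with the same $K_{k,k}$ blow-up for general $k$. Your separate two-level tree $T_{p,q}$ giving the threshold $(\beta-1)(\beta-2)=1$ is a pleasant elementary shortcut for exponent optimality alone, but it does not change the substance of the argument.
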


For comparison with \eqref{Cubed}, note that $\phi^2=\phi+1=2.618\ldots.$  

\begin{proof}[Proof of \eqref{eq:main}.]
Our proof relies on the following special case of the weighted arithmetic mean--geometric mean inequality (see \cite[page~22]{Pachpatte} for example): for positive real numbers $x, y, p,q$  such that $p + q = 1$,
\begin{align}
\label{weightedAMGM}
x^p y^q \leq px + qy.
\end{align}

Hakimi~\cite{Hakimi65} proved that a graph $G$ has an orientation with maximum outdegree at most $k$ if and only if $G$ has maximum average degree at most $2k$. Fix such an orientation for $G$. For each arc $\overrightarrow{uv}$ of $G$, by \eqref{weightedAMGM} with $x = k^{-1} d(u)^{\phi^2}$, $y = d(v)^{\phi}$, $p = \phi^{-2} = 2-\phi$ and $q = \phi^{-1} = \phi-1$,
\begin{equation*}k^{\phi-2} d(u) d(v) \leq \phi^{-2} k^{-1} d(u)^{\phi^2} + \phi^{-1} d(v)^{\phi}.\end{equation*}
Summing over all arcs, and since $d^+(u)\leq k$,
\begin{align*}
& k^{\phi-2} \sum_{uv \in E(G)} d(u) d(v)\\
\leq\,& \Big( \sum_{u \in V(G)} \phi^{-2} k^{-1} d(u)^{\phi^2} d^+(u) \Big) + \Big(\sum_{v \in V(G)} \phi^{-1} d(v)^{\phi} d^-(v) \Big)\\
\leq\,& \phi^{-2} \Big(\sum_{u \in V(G)} d(u)^{\phi^2} \Big) + \phi^{-1} \Big(\sum_{v \in V(G)} d(v)^{\phi + 1} \Big)\\
=\,& \sum_{v \in V(G)} d(v)^{\phi^2}.\qedhere
\end{align*}
\end{proof}

To complete the proof of Theorem~\ref{main}, the following lemma shows that the exponent $\phi^2$ and the constant $k^{2-\phi}$ in \eqref{eq:main} cannot be improved. We use the following notation. For a real number $x$, let $\lceil x \rceil$ be the \emph{ceiling} of $x$; that is, the smallest integer greater than or equal to $x$. For a positive integer $t$, let $[t]$ denote the set $\{1,2,\dots,t\}$.

\begin{lemma}
For all $k\in\mathbb{N}$ and $\eps > 0$, there is a graph $G$ with maximum average degree at most $2k$ such that 
\begin{equation}
\label{goal}
(1 + \eps) \sum_{uv \in E(G)} d(u) d(v) \;\geq \; k^{2-\phi} \sum_{v \in V(G)} d(v)^{\phi^2} .
\end{equation}
\end{lemma}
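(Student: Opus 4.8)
The plan is to reverse-engineer the two inequalities in the proof of \eqref{eq:main} and build a graph that nearly attains equality in both. The weighted AM--GM step \eqref{weightedAMGM} is an equality on an arc $\overrightarrow{uv}$ exactly when $x=y$, i.e.\ $k^{-1}d(u)^{\phi^2}=d(v)^{\phi}$, equivalently $d(v)=k^{1-\phi}d(u)^{\phi}$; writing every degree as $d=k\tau$, this says that along each arc $\tau$ is raised to the power $\phi$ (the head has parameter $\tau_{\mathrm{head}}=\tau_{\mathrm{tail}}^{\phi}$). The subsequent steps use $d^+(u)\leq k$ and $d^-(v)\leq d(v)$, which are tight precisely when every tail has outdegree exactly $k$. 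So I would design $G$ so that these conditions hold for all but a negligible fraction of its edge-weight.

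Concretely, I would take a \emph{layered} graph with levels $0,1,\dots,t$ (indexed over $[t]$), where level $j$ consists of $n_j$ vertices of degree $d_j=\lceil k\tau_j\rceil$ with $\tau_j=T^{\phi^{-j}}$ for a large parameter $T$. Edges run only between consecutive levels, forming a biregular bipartite graph in which each level-$j$ vertex ($j\geq1$) has exactly $k$ neighbours in level $j-1$ and $d_j-k$ neighbours in level $j+1$; the edge counts force $k\,n_j=n_{j-1}(d_{j-1}-k)$, which determines the $n_j$ (after a common scaling making all quantities integral and the bipartite graphs simple). Orienting every edge toward the lower-indexed level makes each non-top vertex have outdegree exactly $k$, so by Hakimi's theorem $\mad(G)\leq 2k$, as required.

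For the computation, using $\tau_{j-1}=\tau_j^{\phi}$ an edge between levels $j$ and $j-1$ has endpoint-degree product $k^2\tau_j\tau_{j-1}=k^2\tau_j^{\phi^2}$, and there are $k\,n_j$ such edges, so $\sum_{uv\in E(G)}d(u)d(v)=k^3\sum_{j\geq1}n_j\tau_j^{\phi^2}$, whereas $k^{2-\phi}\sum_{v}d(v)^{\phi^2}=k^{3}\sum_{j\geq0}n_j\tau_j^{\phi^2}$ since $2-\phi+\phi^2=3$. The two sums differ by exactly the top-level term $k^3 n_0\tau_0^{\phi^2}$. A short estimate using $\sum_{i=0}^{j-1}\phi^{-i}=\phi^{2}(1-\phi^{-j})$ (which rests on $1-\phi^{-1}=\phi^{-2}$) shows that each active level contributes roughly equally, $n_j\tau_j^{\phi^2}\approx n_0 T^{\phi^2}$, while the number of active levels grows like $\log_\phi\log T$. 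Hence the top term is only a $1/\Theta(\log\log T)$ fraction of the total, which drops below $\eps/(1+\eps)$ once $T$ is large, yielding \eqref{goal}.

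The main obstacle is precisely that the equality conditions are \emph{mutually incompatible at the two boundaries}: the top vertices are forced to be sinks (outdegree $0$, not $k$), and the bottom vertices are forced to be sources of degree $k$ (so $\tau=1$), so neither the AM--GM step nor the outdegree step can be tight there. The substance of the argument is to verify that these two boundary defects, together with the error incurred by replacing $k\tau_j$ with the integer $\lceil k\tau_j\rceil$, are of lower order than the dominant middle levels. The top defect is handled above (one level's weight against $\Theta(\log\log T)$ comparable levels); the bottom defect and the rounding are controlled by cutting off the recursion where $\tau_j$ is already close to $1$ and by keeping all interior degrees large, so that the per-edge AM--GM gaps and the relative rounding errors are $o(1)$.
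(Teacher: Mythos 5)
Your proposal is correct and takes essentially the same route as the paper: your layered graph with $\tau_j=T^{\phi^{-j}}$ is precisely the paper's tree (degrees $\lceil a^{\phi^{i-1}}\rceil$, read from the bottom up) after its $K_{k,k}$ blow-up, and your analysis --- equal per-level contributions $n_j\tau_j^{\phi^2}\approx n_0T^{\phi^2}$, a single deficient top level among $\Theta(\log\log T)$ active levels, Hakimi's theorem applied to the rootward orientation with outdegree $k$, and rounding errors that decay geometrically along the degree tower --- mirrors the paper's, which handles the ceilings via the $e^{O(a^{-1})}$ bookkeeping in \eqref{eq:upper bound Li}. The only presentational differences are that the paper proves the $k=1$ case with a tree and then blows up, whereas you build the biregular layers for general $k$ directly (which requires the routine integrality and simplicity check you flag), and that you additionally motivate the construction from the equality conditions of \eqref{weightedAMGM}, which the paper does not spell out.
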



\begin{proof}
First consider the case when $k = 1$. 
Choose integers $a$ and $R$ that are sufficiently large so that $R\geq 2$ and $a\geq 4$ and
$e^{3a^{-1}} (1 + (R-1)^{-1}) \leq  1 + \eps$.

Let $T$ be the tree with $R+1$ levels $L_0, \ldots, L_R$, where
$L_R$ consists of a single root vertex,
each vertex in $L_i$ has $\lceil a^{\phi^{i-1}}\rceil$ children in $L_{i-1}$ for $i\in[R]$,
and $L_0$ consists entirely of leaves, as illustrated in Figure~\ref{Tree}. 
\begin{figure}[h]
\centering
\includegraphics{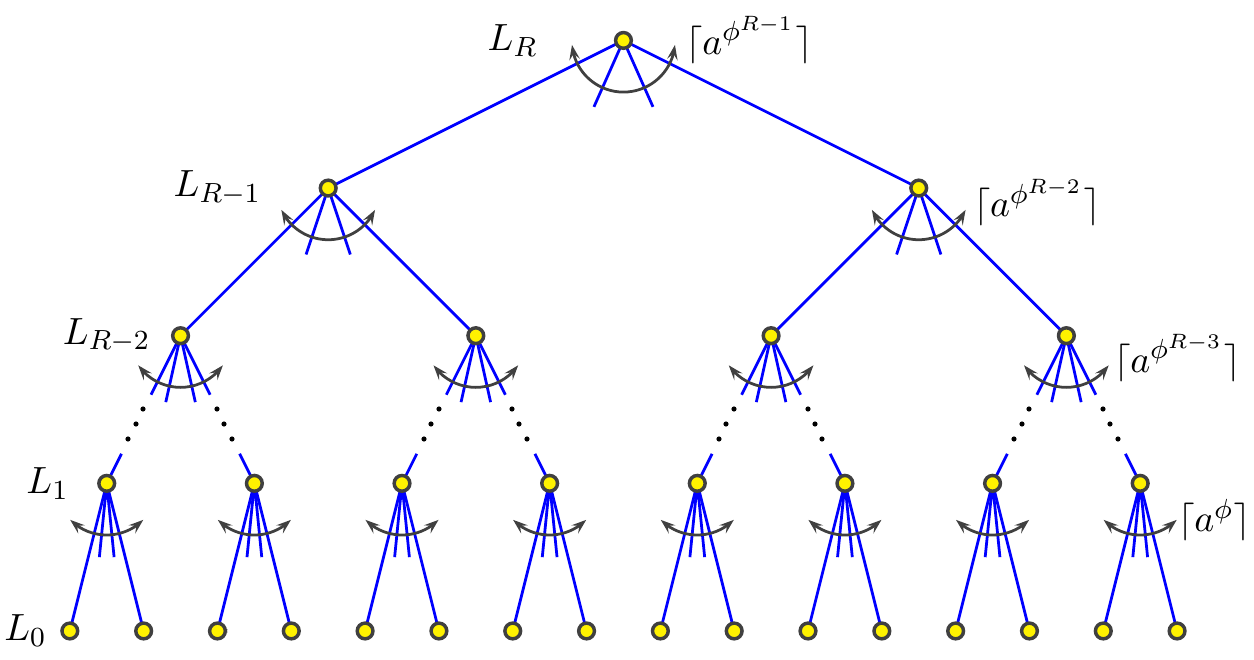}
\caption{The tree $T$. 
\label{Tree}}
\end{figure}
Note that for $i\in[R-1]$,
\begin{equation}
\label{LayerSize}
  |L_{i}| =
\lceil a^{\phi^{R-1}} \rceil
\lceil a^{\phi^{R-2}} \rceil
\cdots
\lceil a^{\phi^{i+1}} \rceil
\lceil a^{\phi^{i}} \rceil.
\end{equation}
We need a lower and an upper bound on $|L_i|$ with the ceilings removed. Note that the exponents in (\ref{LayerSize}) form a geometric progression and that
\begin{equation}
\label{eq:geometric exponents}
  \phi^{R-1} + \phi^{R-2} + \cdots + \phi^{i} + \phi^{i-1} = \frac{\phi^{R}-\phi^{i-1}}{\phi-1}
  = (\phi^{R}-\phi^{i-1})\phi = \phi^{R+1}-\phi^{i}.
\end{equation}
This gives the lower bound
\begin{equation*}
  |L_{i}| \geq a^{\phi^{R-1}} a^{\phi^{R-2}} \cdots a^{\phi^{i+1}} a^{\phi^{i}} = a^{\phi^{R+1}-\phi^{i}}.
\end{equation*}
To obtain an upper bound we use the inequality
\begin{equation*}
   a^t+1 = a^t (1+a^{-t}) \leq a^t\cdot e^{a^{-t}}.
\end{equation*}
We also use that $a^{-\phi} < \tfrac{1}{2}a^{-1}$ (since $a\geq 4$), which implies that
\begin{equation}
\label{eq:sum exponents of a}
  a^{-\phi^{R-1}} + a^{-\phi^{R-2}} + \cdots + a^{-\phi} + a^{-1} < 2a^{-1}.
\end{equation}
Now, for every $i\geq 0$,
\begin{align}
  |L_{i}|
  &\leq (a^{\phi^{R-1}}+1) (a^{\phi^{R-2}}+1) \cdots (a^{\phi^{i}}+1)\nonumber\\
  &\leq a^{\phi^{R+1}-\phi^{i}} \cdot\,  e^{a^{-\phi^{R-1}}} e^{a^{-\phi^{R-2}}} \cdots\, e^{a^{-\phi^{i}}}\nonumber \\
  &\leq a^{\phi^{R+1}-\phi^{i}} \cdot\,  e^{2a^{-1}},
  \label{eq:upper bound Li}
\end{align}
where the last inequality follows from \eqref{eq:sum exponents of a}.

Let $E_i$ be the set of edges of $T$ between $L_i$ and $L_{i-1}$ for each $i \in [R]$.
Note that $|E_i|=|L_{i-1}|$, and for each edge $uv\in E_i$ we have
$d(u)d(v) \geq a^{\phi^{i-1}} a^{\phi^{i-2}} = a^{\phi^{i-1} + \phi^{i-2}} = a^{\phi^i}$ if $i\ne1$
and  $d(u)d(v) = a+1$ if $i=1$ (since $R\geq 2$).

We obtain a lower bound for $\sum_{uv \in E(T)} d(u) d(v)$ as follows:
\begin{align}
\sum_{uv \in E(T)} d(u) d(v)
&= \sum_{i = 1}^R \sum_{uv \in E_i} d(u) d(v) \nonumber\\
&\geq |L_0| (a+1) + \sum_{i=2}^R |L_{i-1}| a^{\phi^{i}} \nonumber\\
&\geq a^{\phi^{R+1}-\phi} \cdot a
+ \sum_{i=2}^R a^{\phi^{R+1}-\phi^{i}} a^{\phi^i} \nonumber\\
& = a^{\phi^{R+1}} (a^{1-\phi} + R-1). \label{eq:lower bound proof}
\end{align}

On the other hand, we obtain an upper bound for $\sum_{v \in V(T)} d(v)^{\phi^2}$ as follows:
\begin{align*}
\sum_{v \in V(T)} d(v)^{\phi^2}
&= \sum_{i= 0}^R \sum_{v \in L_i} d(v)^{\phi^2} \\
&= \lceil a^{\phi^{R-1}}\rceil^{\phi^2} + |L_0| + \sum_{i=1}^{R-1} |L_i| \, \Bigl(\lceil a^{\phi^{i-1}}\rceil + 1 \Bigr)^{\phi^2}.
\end{align*}
The first term in the previous line is smaller than $a^{\phi^{R+1}}\cdot\, e^{a^{-1}}$. The second term has an upper bound given by (\ref{eq:upper bound Li}). Finally, each term in the remaining sum can be estimated in a similar way as \eqref{eq:upper bound Li}:
\begin{equation*}
 |L_i| \, \Bigl(\lceil a^{\phi^{i-1}}\rceil + 1 \Bigr)^{\phi^2}
 \leq |L_i| \, \bigl( a^{\phi^{i-1}} + 2 \bigr)^{\phi^2} \leq a^{\phi^{R+1}-\phi^{i-1}} \cdot\,  e^{3a^{-1}}.
\end{equation*}
This implies that
\begin{align*}
 \sum_{v \in V(T)} d(v)^{\phi^2}
& \leq e^{3a^{-1}} \left(1 + a^{-\phi^{-1}} + R-1\right) a^{\phi^{R+1}}\\
& \leq e^{3a^{-1}} \left(1 + \tfrac{1}{R-1}\right)\, \left(a^{-\phi^{-1}} + R-1\right) a^{\phi^{R+1}}.
\end{align*}
Hence, by (\ref{eq:lower bound proof}) and by the choice of $R$ and $a$,
\begin{equation*}
\sum_{v \in V(T)} d(v)^{\phi^2} \;\leq\; (1 + \eps) \sum_{uv \in E(T)} d(u) d(v).
\end{equation*}
This proves the lemma for $k=1$.

\medskip
To obtain the same result for higher $k$, simply take a blow-up $G$ of the tree $T$ defined above, in which each vertex is replaced by a stable set of $k$ vertices and each edge is replaced by a copy of the complete bipartite graph $K_{k,k}$, as illustrated in Figure~\ref{BlowUp}. 
\begin{figure}[h]
\centering
\includegraphics{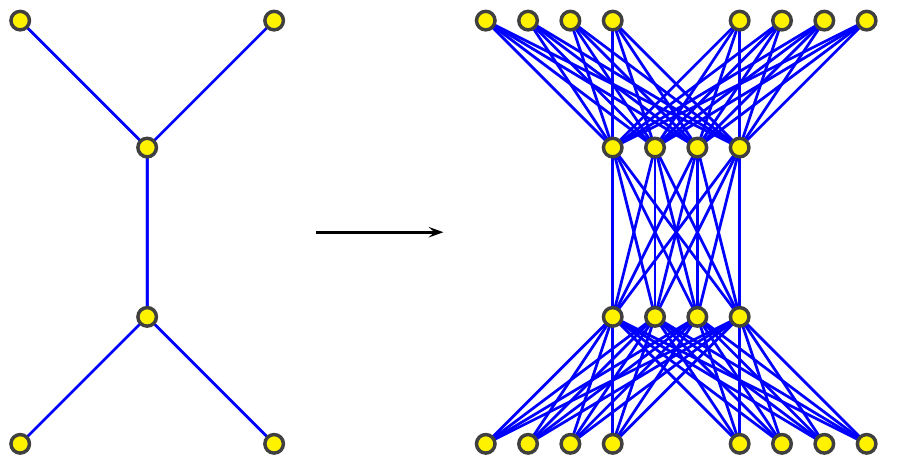}
\caption{Blow-up of a tree.
\label{BlowUp}}
\end{figure}
This construction multiplies all the degrees by a factor of $k$, and replaces each edge by $k^2$ edges. Thus, if $d_T(v)$ and $d_G(v)$ respectively denote the degree of a vertex $v$ in $T$ and in $G$, then 
\begin{align*}
k^{2-\phi} \sum_{v \in V(G)} d_G(v)^{\phi^2}
&= k^{3-\phi} \sum_{v \in V(T)} (kd_T(v))^{\phi^2}\\
&\leq (1 + \eps) k^{3-\phi} k^{\phi^2} \sum_{uv \in E(T)} d_T(u) d_T(v) \\
&= (1 + \eps)k^{2} \sum_{uv \in E(T)} (kd_T(u)) (kd_T(v)) \\
&= (1 + \eps) \sum_{uv \in E(G)} d_G(u) d_G(v),
\end{align*}
which proves \eqref{goal}. To see that $G$ has maximum average degree at most $2k$, orient each edge of $T$ towards the root, and then orient each edge of $G$ by following the orientation of the corresponding edge in $T$. Thus each vertex of $G$ has outdegree at most $k$, and $G$ has maximum average degree at most $2k$ by the above result of Hakimi~\cite{Hakimi65}.
\end{proof}



\end{document}